\newtheorem{theorem}{Theorem}[section]
\newtheorem{lemma}[theorem]{Lemma}
\theoremstyle{definition}
\newtheorem{definition}[theorem]{Definition}
\newtheorem{remark}[theorem]{Remark}
\newtheorem{conjecture}[theorem]{Conjecture}
\newtheorem{example}[theorem]{Example}
\newtheorem{algorithm}[theorem]{Algorithm}
\numberwithin{equation}{section}
\author{Injo Hur and Yeansu Kim}
\address{
Department of Mathematics Education\\
Chonnam National University\\
Gwangju city, Korea}
\email{injohur@jnu.ac.kr}
\address{
Department of Mathematics Education\\
Chonnam National University\\
Gwangju city, Korea}
\email{ykim@jnu.ac.kr}
\keywords{balanced and non-transitive dice; fair dice}
\subjclass[2010]{Primary 91A60; Secondary 05A99}
\begin{document}

\title[Balanced non-transitive dice]{Possible Probability and irreducibility of balanced non-transitive dice}
\date{April 2020}

\begin{abstract}
We construct irreducible balanced non-transitive sets of $n$-sided dice for any positive integer $n$, which was raised in \cite[Question 5.2]{SS17}. One main tool of the construction is to study so-called fair sets of dice. Furthermore, we also study the distribution of the probabilities of balanced non-transitive sets of dice. For a lower bound, we show that the probability could be arbitrarily close to $\frac{1}{2}$ and for a upper bound, we construct a balanced non-transitive set of dice whose probability is $\frac{1}{2} + \frac{13-\sqrt{153}}{24} \approx \frac{1}{2} + \frac{1}{9.12}.$
\end{abstract}
\maketitle

\section{Introduction}
A non-transitive set of dice, first introduced by Gardner \cite{G70} and further studied in \cite{S94}, is triples of labeled dice, $A$, $B$, and $C$, with the property that all three probabilities that $A$ rolls higher than $B$, that $B$ rolls higher than $C$, and that $C$ rolls higher than $A$ are greater than $\frac{1}{2}$. We write this as $P(A>B)>\frac{1}{2}, P(B>C)>\frac{1}{2}$, and $P(C>A) > \frac{1}{2}$. It is called non-transitive because if we define the relation $X \sim Y$ as $P(X>Y) > \frac{1}{2}$ for $X, Y \in \{A, B, C\}$, then the relation $\sim$ is not transitive. For example, the following set of $6$-sided dice is non-transitive since $P(A>B)=P(B>C)=P(C>A)=\frac{19}{36}$:
\[
\begin{tabular}{ccc}
\cline{2-2}
\multicolumn{1}{c|}{}    & \multicolumn{1}{c|}{18} &                        \\ \hline
\multicolumn{1}{|c|}{13} & \multicolumn{1}{c|}{10} & \multicolumn{1}{c|}{7} \\ \hline
\multicolumn{1}{c|}{}    & \multicolumn{1}{c|}{5}  &                        \\ \cline{2-2}
\multicolumn{1}{c|}{}    & \multicolumn{1}{c|}{4}  &                        \\ \cline{2-2}
                         & A                       &                       
\end{tabular}
\ \ \ 
\begin{tabular}{ccc}
\cline{2-2}
\multicolumn{1}{c|}{}    & \multicolumn{1}{c|}{17} &                        \\ \hline
\multicolumn{1}{|c|}{14} & \multicolumn{1}{c|}{12} & \multicolumn{1}{c|}{9} \\ \hline
\multicolumn{1}{c|}{}    & \multicolumn{1}{c|}{3}  &                        \\ \cline{2-2}
\multicolumn{1}{c|}{}    & \multicolumn{1}{c|}{2}  &                        \\ \cline{2-2}
                         & B                       &                       
\end{tabular}
\ \ \ 
\begin{tabular}{ccc}
\cline{2-2}
\multicolumn{1}{c|}{}    & \multicolumn{1}{c|}{16} &                        \\ \hline
\multicolumn{1}{|c|}{15} & \multicolumn{1}{c|}{11} & \multicolumn{1}{c|}{8} \\ \hline
\multicolumn{1}{c|}{}    & \multicolumn{1}{c|}{6}  &                        \\ \cline{2-2}
\multicolumn{1}{c|}{}    & \multicolumn{1}{c|}{1}  &                        \\ \cline{2-2}
                         & C                       &                       
\end{tabular}
\]

In \cite{SS17}, Schaefer and Schweig constructed balanced non-transitive sets of $n$-sided dice for any positive integer $n \geq 3$. Main idea of the construction in \cite{SS17} is to combine several balanced non-transitive sets of dice. Therefore, the sets of dice that are constructed in \cite{SS17} are reducible. (See Definition \ref{def:irreducible} for the meaning of being irreducible.) and authors in \cite{SS17} questions whether there exists an irreducible balanced non-transitive sets of $n$-sided dice for all $n$.
One main purpose of the paper is to construct irreducible non-transitive sets of $n$-sided dice for any positive integer $n$. Our main idea of the construction is to use so-called fair sets of $2$-sided dice. Here, being fair means that probabilities $P(A>B), P(B>C)$, and $P(C>A)$ are all $\frac{1}{2}$. Although we used $2$-sided dice to construct new sets of dice, understanding fair sets of $n$-sided dice for any positive integer $n$ seems to be an important step to understand all irreducible balanced non-transitive dice. We also study fair sets of $n$-sided dice for any positive integer $n$. 

The second purpose of the paper is to study possible probabilities of balanced non-transitive sets of $n$-sided dice, i.e., possible value of $P(A>B)$. As far as we know, previous known constructions of balanced non-transitive sets of dice have probability $\frac{1}{2} < P(A>B) < \frac{3}{5}=\frac{1}{2} + \frac{1}{10}$. We first show that there exists a balanced non-transitive set of dice such that $P(A>B)-\frac{1}{2}>0$ becomes arbitrary small, i.e., $\frac{1}{2}$ is a sharp lower bound for balanced non-transitive set of dice. We expect that $\frac{1}{2}+\frac{1}{10}$ is not a sharp upper bound and make a conjecture that $\frac{1}{2} < P(A>B) < \frac{1}{2} + \frac{1}{9}$. To support the conjecture, we first explicitly calculate and prove that both the probability $P(A>B)$ in our paper and the one in \cite{SS17} are less than $\frac{1}{2}+\frac{1}{9}$. We also provide another new construction of a balanced non-transitive set of dice such that $P(A>B) \approx \frac{1}{2} + \frac{1}{9.12}$, which is less than $\frac{1}{2}+\frac{1}{9}$ and is, as far as we know, the maximum among known constructions of balanced non-transitive sets of dice.

Let us now describe the content of our paper. After we introduce notation and preliminaries in Section \ref{notation}, we study fair sets of dice in Section \ref{fair sets}. In Section \ref{irreducible}, we provide construction of irreducible balanced sets of $n$-sided dice for any positive integer $n$ using fair sets of $2$-sided dice. In Section \ref{probability}, we calculate and study possible probability $P(A>B)$ for balanced non-transitive sets of dice.

\section{Notation and preliminaries}\label{notation}

We briefly recall definitions and notations in \cite{SS17} that we are going to use.

\begin{definition}
Fix an integer $n>0$. A set of $n$-sided dice is a collection of three pairwise-disjoint sets $A, B,$ and $C$ with $|A|=|B|=|C|=n$ and $A \cup B \cup C = \{1, \cdots, 3n \}$. We think of die $A, B$, and $C$ as being labeled with the elements of $A, B$, and $C$, respectively and we assume that each dice is fair (i.e. the probability of rolling any one of its numbers is $\frac{1}{n}$.)
\end{definition}

\begin{definition}\label{def} A set of dice is called
\begin{itemize} 
    \item  {\it balanced} if $P(A>B)=P(B>C)=P(C>A)$. 
    \item  {\it non-transitive} if each of $P(A>B), P(B>C)$, and $P(C>A)$ exceeds $\frac{1}{2}$.
    \item  {\it fair} if $P(A>B)=P(B>C)=P(C>A)=\frac{1}{2}$
\end{itemize}
\end{definition}

\begin{definition}\label{word}
If $D=(A, B, C)$ is a set of $n$-sided dice, define a word $\sigma(D)$ by the following rule: the i$^{th}$ letter of $\sigma(D)$ corresponds to the die on which the number $i$ labels a side.
\end{definition}

For a word $\sigma$, a number of alphabets in the word, which we call the length of $\sigma$, is denoted by $|\sigma|$. A number of $A$ (resp. $B$, $C$) in $\sigma$ is denoted by $|A|_{\sigma}$ (resp. $|B|_{\sigma}$, $|C|_{\sigma}$). For example, if $\sigma$ corresponds to a set of $n$-sided dice, $|A|_{\sigma}=|B|_{\sigma}=|C|_{\sigma} = n$. 

Let $D=(A, B, C)$ be a set of $n$-sided dice and $\sigma$ be its corresponding word (Definition \ref{word}). Then we denote by $P_D(A>B)$ or $P_{\sigma}(A>B)$ the probability that the number rolled on A is greater than the number rolled on B when we roll $A$ and $B$. Similarly, we define $P_D(B>C), P_{\sigma}(B>C), P_D(C>A)$, and $P_{\sigma}(C>A)$.

\begin{definition}\label{defs w.r.t. N}
For a set of $n$-sided dice $D$ (and its corresponding word $\sigma$ with length $3n$), we let $N_D(A>B):=n^2P_D(A>B), N_D(B>C):=n^2P_D(B>C)$, and $N_D(C>A):=n^2P_D(C>A)$. Similarly we also define $N_{\sigma}(A>B)$, $N_{\sigma}(B>C)$, and $N_{\sigma}(C>A)$.
\end{definition}

It is by definition that 
$D=(A,B,C)$ is balanced if $N_D(A>B)=N_D(B>C)=N_D(C>A)$. And $D=(A,B,C)$ is non-transitive if $N_D(A>B), N_D(B>C), N_D(C>A) > \frac{n^2}{2}.$

\begin{remark}
$N_D(A>B)$ in Definition \ref{defs w.r.t. N} represents the number of consequences that a die $A$ wins a die $B$ when we consider all possible outcomes when we roll $A$ and $B$. Note that $N_D(A>B)$ is the same as the notation $\displaystyle\sum\limits_{s_i=A} q_{\sigma(D)}^+(s_i)$ in \cite{SS17}.
\end{remark}

\begin{example}
Let $D$ be the following set of $3$-sided dice:
\begin{eqnarray*}
A &=& 9 \ 5 \ 1 \\
B &=& 8 \ 4 \ 3 \\
C &=& 7 \ 6 \ 2.
\end{eqnarray*}
Then $\sigma(D)=ACBBACCBA$ and $P(A>B)=P(B>C)=P(C>A)= \frac{5}{9}$. Therefore this set of dice is balanced and non-transitive.
\end{example}

\begin{definition}
The concatenation of two words of $\sigma$ and $\tau$ is simply the word $\sigma$ followed by $\tau$, denoted by $\sigma \tau$.
\end{definition}
We recall a recursive relation of $N(A>B), N(B>C)$, and $N(C>A)$. The following is in the proof of Lemma 2.4 in \cite{SS17}.
\begin{lemma}[(1) in \cite{SS17}]
Let $\sigma$ and $\tau$ be two words that correspond to two sets of dice, respectively. Let $|\sigma|=3m$ and $|\tau|=3n$ (i.e., corresponding dice are $m$-sided and $n$-sided, respectively). Then we have
\begin{equation}\label{recursion formula}
    N_{\sigma\tau}(A>B) = N_{\sigma}(A>B) + N_{\tau}(A>B) + mn.
\end{equation}
\end{lemma}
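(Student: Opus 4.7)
The plan is to decompose $N_{\sigma\tau}(A>B)$ according to which of the two subwords contributes the $A$-labeled face and which contributes the $B$-labeled face. Recall that $N_{\sigma\tau}(A>B)$ counts ordered pairs of positions $(a,b)$ in the concatenated word such that the $a$-th letter is $A$, the $b$-th letter is $B$, and the integer labeling position $a$ exceeds the integer labeling position $b$. After concatenation the positions $1,\dots,3m$ carry the labels coming from $\sigma$, and the positions $3m+1,\dots,3m+3n$ carry the labels coming from $\tau$; in particular every label arising from $\tau$ is strictly larger than every label arising from $\sigma$.

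Next I would split the count into four exhaustive cases based on the origin of $a$ and $b$:
\begin{enumerate}
\item Both $a$ and $b$ lie in the $\sigma$-portion. Translating back to $\sigma$, the number of such winning pairs is exactly $N_{\sigma}(A>B)$.
\item Both $a$ and $b$ lie in the $\tau$-portion. Since adding the constant $3m$ to all labels in $\tau$ preserves the ordering, the number of winning pairs here is $N_{\tau}(A>B)$.
\item $a$ comes from $\tau$ and $b$ comes from $\sigma$. Then automatically the label at $a$ exceeds the label at $b$, so every such pair contributes. The total is $|A|_{\tau}\cdot|B|_{\sigma}=n\cdot m$.
\item $a$ comes from $\sigma$ and $b$ comes from $\tau$. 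Now the label at $a$ is always smaller than the label at $b$, so the contribution is $0$.
\end{enumerate}

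Summing the four contributions yields $N_{\sigma\tau}(A>B)=N_{\sigma}(A>B)+N_{\tau}(A>B)+mn$, which is the desired identity. There is no real obstacle here; the only thing to watch is the bookkeeping that ensures the cross-term is $mn$ (with $m$ coming from $|B|_{\sigma}$ and $n$ from $|A|_{\tau}$) rather than some other product, and that the reverse cross-term vanishes because of the shift by $3m$. The same argument applied letter-for-letter gives analogous recursions for $N_{\sigma\tau}(B>C)$ and $N_{\sigma\tau}(C>A)$, which will be useful in the later sections.
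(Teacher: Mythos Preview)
Your argument is correct: the four-case decomposition according to which subword each face comes from is exactly the right bookkeeping, and each contribution is computed correctly (in particular the cross-term $|A|_{\tau}\cdot|B|_{\sigma}=nm$ and the vanishing of the opposite cross-term). Note that the paper itself does not supply a proof here; it simply quotes the identity from the proof of Lemma~2.4 in \cite{SS17}, and your counting argument is precisely the standard one underlying that reference.
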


\section{Fair sets of dice}\label{fair sets}

In the following two lemmas ``probabilities" means $P(A>B)$, $P(B>C)$ and $P(C>A)$ and $x$, $y$ or $z$ is one of $A$, $B$ or $C$.
\begin{lemma}\label{symm exchange}
Assume that $\sigma(D)$ has $xy$ and $yx$, i.e., $\sigma(D)= \cdots xy \cdots yx \cdots$. Then exchanging the orders of $xy$ and $yx$ at the same time, i.e., $\sigma(\tilde{D})= \cdots yx \cdots xy \cdots$ does not change the ``probabilities".
\end{lemma}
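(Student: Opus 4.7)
The plan is to track exactly which labels sit on which die before and after the swap. Say the pattern $xy\cdots yx$ uses positions $p, p+1, q, q+1$ with $p+1 < q$, so that die $x$ originally carries the labels $\{p, q+1\}$ at these four spots and die $y$ carries $\{p+1, q\}$. After the swap these two label sets are exchanged, while the third die $z$ and all other sides of $x$ and $y$ are untouched.

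The key observation I would establish first is: for any $\ell \in \{1,\ldots,3n\} \setminus \{p, p+1, q, q+1\}$,
\[
\bigl|\{p, q+1\} \cap (\ell, \infty)\bigr| \;=\; \bigl|\{p+1, q\} \cap (\ell, \infty)\bigr|.
\]
This is immediate by splitting into the three cases $\ell < p$, $p+1 < \ell < q$, and $\ell > q+1$, in which both sides equal $2$, $1$, $0$ respectively. Equivalently, the two pairs $\{p, q+1\}$ and $\{p+1, q\}$ have the same rank relative to every integer outside these four positions.

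Using this, I would verify each of the three probabilities separately. For $P(x > z)$ and $P(z > x)$: every label of $z$ lies outside the four swapped positions, so the observation says $z$'s labels make the same contribution against the new two $x$-labels as against the old ones, while the unchanged sides of $x$ contribute identically in both configurations. The same argument handles $P(y > z)$ and $P(z > y)$. For $P(x > y)$: comparisons between unchanged sides of $x$ and $y$ are untouched; a comparison between an unchanged side of one die and a swapped side of the other is again covered by the observation applied to the unchanged side; and the internal $2 \times 2$ comparison of $\{p, q+1\}$ against $\{p+1, q\}$ versus the reversed pairing can be checked by hand to give exactly two wins for $x$ in both configurations.

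The hard part, if any, is just the bookkeeping of decomposing each count into the three pieces above. The actual content is the elementary fact that $\{p, q+1\}$ and $\{p+1, q\}$ share a midpoint and straddle exactly the same outside integers, so that no pairwise comparison—whether with an outside label or with the opposing two labels—can distinguish one pair from the other.
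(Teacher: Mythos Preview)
Your proof is correct. The paper actually states this lemma without proof, evidently treating it as an elementary observation, so there is no argument to compare against; your approach---tracking the four affected positions $p,p+1,q,q+1$, noting that the pairs $\{p,q+1\}$ and $\{p+1,q\}$ are indistinguishable from any outside label and give the same $2\times 2$ internal count---fills in precisely the details one would expect.
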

\begin{lemma}\label{xyz exchange}
Assume that $\sigma(D)$ is a word having three different letters, say $x$, $y$ and $z$. Then $xyz \, \sigma(D)$ and $\sigma(D)\, xyz$ have the same ``probabilities" if and only if $D$ has the same numbers of $x$, $y$ and $z$. 
\end{lemma}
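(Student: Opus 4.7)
The plan is to derive the lemma from a mild generalization of the concatenation identity \eqref{recursion formula}. That identity is stated in \cite{SS17} for words with equal letter counts, but its proof only uses the fact that every letter of the second word carries a larger label than every letter of the first. The same bookkeeping, applied to arbitrary words $\sigma,\tau$ in the alphabet $\{A,B,C\}$ with counts $a=|A|_\sigma$, $b=|B|_\sigma$, $c=|C|_\sigma$ and $a'=|A|_\tau$, $b'=|B|_\tau$, $c'=|C|_\tau$, gives
\begin{equation*}
 N_{\sigma\tau}(A>B) = N_{\sigma}(A>B) + N_{\tau}(A>B) + a'\,b,
\end{equation*}
together with the analogous identities whose cross terms are $b'c$ for $N_{\sigma\tau}(B>C)$ and $c'a$ for $N_{\sigma\tau}(C>A)$. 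I would open the argument by stating and verifying this generalized formula.

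Next, I would specialize to $\tau = xyz$, so that $a'=b'=c'=1$ regardless of which permutation of $\{A,B,C\}$ the letters $x,y,z$ represent. Since $xyz\,\sigma(D)$ and $\sigma(D)\,xyz$ have the same total length, comparing the three ``probabilities'' is equivalent to comparing the three $N$-values. The internal contributions $N_\sigma$ and $N_\tau$ appear on both sides and cancel, so the whole comparison is carried by the cross terms. Subtracting the generalized formula applied in the two orderings yields
\begin{align*}
 N_{xyz\,\sigma}(A>B) - N_{\sigma\,xyz}(A>B) &= a-b,\\
 N_{xyz\,\sigma}(B>C) - N_{\sigma\,xyz}(B>C) &= b-c,\\
 N_{xyz\,\sigma}(C>A) - N_{\sigma\,xyz}(C>A) &= c-a.
\end{align*}
All three differences vanish simultaneously if and only if $a=b=c$, which is precisely the condition that $D$ has the same number of $x$, $y$ and $z$. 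This settles both directions of the equivalence at once.

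There is no real obstacle: the argument is elementary bookkeeping. The only care needed is to justify dropping the equal-count hypothesis when proving the generalized version of \eqref{recursion formula}, and to note that the three comparisons for $A>B$, $B>C$ and $C>A$ produce the three conditions $a=b$, $b=c$ and $c=a$, whose simultaneous validity is equivalent to $a=b=c$.
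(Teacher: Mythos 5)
Your proof is correct. Note that the paper states Lemma \ref{xyz exchange} without giving any proof, so there is no in-text argument to compare against; your generalized concatenation identity supplies exactly the missing justification. The key steps are all sound: the cross term in \eqref{recursion formula} is really $|A|_\tau\,|B|_\sigma$ (and $|B|_\tau\,|C|_\sigma$, $|C|_\tau\,|A|_\sigma$ for the other two comparisons) with no equal-count hypothesis needed, the internal terms $N_\sigma$ and $N_\tau$ cancel when the two orderings are subtracted, and the three resulting differences $a-b$, $b-c$, $c-a$ vanish simultaneously precisely when $a=b=c$, which handles both directions of the equivalence. One small wording fix: the reason comparing the ``probabilities'' reduces to comparing the $N$-counts is not merely that $xyz\,\sigma(D)$ and $\sigma(D)\,xyz$ have the same total length, but that they have the same individual letter counts $|A|$, $|B|$, $|C|$ (hence identical normalizing denominators such as $|A|\,|B|$ for $P(A>B)$); this is immediate, since each word consists of $\sigma(D)$ together with exactly one extra copy of each letter, but it is the fact you should cite.
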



\begin{definition}\label{similar}
Two words are called {\it similar}, $\sigma(D_1) \sim \sigma(D_2)$, if  $\sigma(D_1)$ can be re-written to $\sigma(D_2)$ by exchanging the orders in Lemmas \ref{symm exchange} and \ref{xyz exchange}.
\end{definition}
Note that if $\sigma(D_1) \sim \sigma(D_2)$, then two words have the same length and  $P_{D_1}(A>B)=P_{D_2}(A>B), P_{D_1}(B>C)=P_{D_2}(B>C)$, and $P_{D_1}(C>A)=P_{D_2}(C>A)$.

\begin{example}
Lemma \ref{symm exchange} and Lemma \ref{xyz exchange} indicate that $\cdots xy \cdots yx \cdots \sim \cdots yx \cdots xy \cdots$ and $xyz \, \sigma(D) \sim \sigma(D)\, xyz$ if and only if  $|x|_{\sigma(D)}=|y|_{\sigma(D)}=|z|_{\sigma(D)}$.
\end{example}

\begin{conjecture}[Fair dice]\label{fair dice} 
If $D$ is fair, then the length of $D$ is a multiple of 6 and it is similar to a product of $xyzzyx$. 
\end{conjecture}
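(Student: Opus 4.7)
My plan is to induct on $n$, where $|\sigma(D)|=3n$. First I observe that $n$ must be even, equivalently that $3n$ is a multiple of $6$: by Definition \ref{defs w.r.t. N}, fairness forces $N_{D}(A>B)=n^{2}/2$, which is a non-negative integer, so $2\mid n$. The base case $n=0$ of the induction is trivial, since the empty word is a product of zero copies of $xyzzyx$.

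The inductive step rests on what I will call the \emph{extraction lemma}: every fair word $\sigma$ of length $3n\geq 6$ is similar to one of the form $\sigma'\cdot xyzzyx$ for some permutation $(x,y,z)$ of $(A,B,C)$. Granting this, the recursion formula \eqref{recursion formula} applied to $\sigma=\sigma'\cdot xyzzyx$ with $|\sigma'|=3(n-2)$ gives
\begin{equation*}
\tfrac{n^{2}}{2}\ =\ N_{\sigma}(A>B)\ =\ N_{\sigma'}(A>B)+N_{xyzzyx}(A>B)+2(n-2)\ =\ N_{\sigma'}(A>B)+2+2(n-2),
\end{equation*}
forcing $N_{\sigma'}(A>B)=(n-2)^{2}/2$ (the computation $N_{xyzzyx}(A>B)=2$ is independent of the choice of permutation, as can be checked on any single representative). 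The analogous identities for $B>C$ and $C>A$ show that $\sigma'$ is itself fair, and by the inductive hypothesis $\sigma'$ is similar to a product of $xyzzyx$-blocks, and hence so is $\sigma$.

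To prove the extraction lemma I would argue in three stages using both similarity moves. Since every fair word has equal letter-counts, Lemma \ref{xyz exchange} applies freely and cyclically shifts any three-distinct-letter window between the two ends of $\sigma$. Stage 1 is to produce a double $\sigma_{3n-3}\sigma_{3n-2}=zz$ via $xy$--$yx$ swaps from Lemma \ref{symm exchange}, possibly preceded by cyclic shifts. Stage 2 is to bring a common letter $y\neq z$ into positions $3n-4$ and $3n-1$ via further swaps, yielding the palindromic tail $yzzy$. Stage 3 places the remaining letter $x$ into positions $3n-5$ and $3n$, completing the desired tail $xyzzyx$.

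The main obstacle is Stage 1: producing a double at the chosen end of the word. Two complementary approaches would be (i) showing that every fair word of length at least $6$ must contain at least one pair of consecutive equal letters, then transporting it to positions $3n-3, 3n-2$ using Lemmas \ref{symm exchange} and \ref{xyz exchange}; or (ii) settling $n=2$ by direct enumeration --- a case analysis on the first letter yields exactly the six permutations of $xyzzyx$, all pairwise similar under Lemma \ref{symm exchange} --- and then reducing the case $n\geq 4$ to this via a potential-function argument measuring the distance from the canonical form $(xyzzyx)^{n/2}$. The central technical hurdle, and presumably the reason the authors state this only as a conjecture, is verifying rigorously that the limited moves of Lemmas \ref{symm exchange} and \ref{xyz exchange} are strong enough to always reach the desired tail structure from an arbitrary fair word.
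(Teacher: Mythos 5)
The statement you are trying to prove is left as a conjecture in the paper: the authors prove only the two-letter analogue (that a fair word in $A,B$ alone is similar to $(ABBA)^m$) and explicitly leave the three-letter case open, so there is no proof in the paper to compare against. Within your proposal, the parts that are actually carried out are sound but easy: the parity argument ($N_D(A>B)=n^2/2$ must be an integer, so $n$ is even and the length $3n$ is a multiple of $6$) correctly settles the first half of the conjecture, and the induction via \eqref{recursion formula}, together with the check $N_{xyzzyx}(A>B)=N_{xyzzyx}(B>C)=N_{xyzzyx}(C>A)=2$ for every permutation $(x,y,z)$, correctly reduces the second half to your ``extraction lemma.''

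The genuine gap is that the extraction lemma \emph{is} the conjecture, and Stages 1--3 are a plan rather than an argument. None of the load-bearing claims is verified: that every fair word of length at least $6$ contains a doubled letter (or, alternatively, a complete enumeration for $n=2$ plus a workable potential function for $n\ge 4$); that such a double can be transported to a prescribed position using only the moves of Lemmas \ref{symm exchange} and \ref{xyz exchange}; and that the $yzzy$ and then $xyzzyx$ tails can always be completed. Note also that Lemma \ref{xyz exchange} is weaker than your phrasing suggests: it only allows moving a block of three distinct letters from one end of the word to the other (legitimate here since fair words have equal letter counts), not repositioning an arbitrary internal window, so the ``transport'' steps cannot lean on it the way your sketch implies. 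In the paper's two-letter proof the key point is that the absence of a usable adjacent pair forces the word into the shape $A\cdots AB\cdots B$, which visibly fails to be fair; the three-letter analogue of that dichotomy is exactly what is missing here, and acknowledging the hurdle, as you do, does not close it. As it stands, your proposal proves the divisibility statement and the reduction, but not the structural half of Conjecture \ref{fair dice}.
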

Note that $\tau:=ABCCBA$ is a fair set of $2$-sided dice, which will be used in Section \ref{irreducible}. 

\begin{example}
Let $\sigma(D)$ be $AABBCCCCBBAA$. It is fair and 
\[
AABBCCCCBBAA \sim ABABCCCCBABA \sim ABACBCCBCABA
\]
\[\sim ABCABCCBACBA \sim ABCCBAABCCBA =(ABCCBA)^2.
\]
\end{example}

To support Conjecture \ref{fair dice}, let us consider only {\it two dice} case, i.e., a fair word with only two letters $A$ and $B$. In this case a set of dice and a corresponding word are called fair if $P(A>B)=P(B>A)=\frac{1}{2}.$
\begin{theorem}
Assume that a given fair word $\sigma_f$ has $4m$ letters with $|A|=|B|=2m$. Then $\sigma_f\sim (ABBA)^m$.
\end{theorem}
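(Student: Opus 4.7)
The plan is to reduce every fair word of length $4m$ to a single canonical representative $B^m A^{2m} B^m$ under $\sim$, and then invoke transitivity of $\sim$ together with the fact that $(ABBA)^m$ is itself fair. Writing the positions of $A$ in a word $\sigma$ as $1 \le a_1 < a_2 < \cdots < a_{2m} \le 4m$, I will descend on the potential $Q(\sigma) := \sum_{i=1}^{2m} a_i^2$.

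The $\sim$-move of Lemma~\ref{symm exchange} replacing $\cdots AB \cdots BA \cdots$ with $\cdots BA \cdots AB \cdots$ sends some pair of $A$-positions $(a_i, a_j)$ with $i < j$ and $a_j \ge a_i + 3$ to $(a_i + 1, a_j - 1)$, strictly decreasing $Q$ by $2(a_j - a_i - 1) \ge 4$. Because $Q$ takes bounded non-negative integer values, any sequence of such ``contracting'' swaps must terminate, and since $\sim$ preserves probabilities the word remains fair throughout. The problem therefore reduces to classifying the terminal configurations among fair words.

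To classify them, partition the $A$'s of $\sigma$ into maximal runs of consecutive positions and case-split on the number $r$ of such runs. When $r \ge 3$, the last $A$ of the first run and the first $A$ of the third run are at distance $\ge 4$, each with a $B$ on the appropriate flank, yielding a valid contracting swap. When $r = 2$ with $\ge 2$ $B$'s between the two runs, the two run-endpoints themselves furnish such a swap. The subtle case is $r = 2$ with a single $B$ between the runs: writing $g_0$ for the number of leading $B$'s and $s_1, s_2 \ge 1$ for the two run sizes, the fairness identity $\sum_i a_i = m(4m+1)$ collapses to $2m g_0 + s_2 = 2m^2$, which forces $s_2$ to be a positive multiple of $2m$ and hence contradicts $s_2 \le 2m - 1$; so no such fair word exists. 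Finally, when $r = 1$, the same identity pins the single run to positions $m+1, \ldots, 3m$, so $\sigma = B^m A^{2m} B^m$.

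Hence every fair $\sigma_f$ descends under $\sim$ to $B^m A^{2m} B^m$, and since $(ABBA)^m$ is itself fair, the identical descent yields $(ABBA)^m \sim B^m A^{2m} B^m$; transitivity of $\sim$ then gives $\sigma_f \sim (ABBA)^m$. The main obstacle will be the two-run, single-$B$-gap subcase, which is resolved by the short arithmetic contradiction above; the remaining cases reduce to the generic pairing construction or the one-line $r = 1$ computation.
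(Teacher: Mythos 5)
Your proposal is correct, and it takes a genuinely different route from the paper's. The paper proceeds by induction on $m$, peeling a factor $ABBA$ off the front: starting from the leading $A$ it repeatedly finds a later $BA$ and applies the simultaneous exchange of Lemma~\ref{symm exchange} to drag the needed letters forward, arguing in each case that such a $BA$ must exist because otherwise the word would have a shape like $A\cdots ABA\cdots AB\cdots B$, which cannot be fair. You instead normalize every fair word to one canonical representative $B^mA^{2m}B^m$: using only exchanges of the form $\cdots AB\cdots BA\cdots \mapsto \cdots BA\cdots AB\cdots$, your monovariant $Q=\sum_i a_i^2$ drops by $2(a_j-a_i-1)\ge 4$ each time (this computation is right, and the sum $\sum_i a_i$, hence fairness, is preserved), so termination is automatic; the terminal fair words are then pinned down by your run analysis together with the fairness identity $\sum_i a_i=m(4m+1)$, which is correct since the number of pairs in which $A$ beats $B$ is $\sum_i(a_i-i)$ and fairness makes it $2m^2$. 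Your treatment of the delicate two-run, single-gap case via $2mg_0+s_2=2m^2$, forcing $2m\mid s_2$ against $1\le s_2\le 2m-1$, is a clean and valid way to exclude it, and your insistence on run three when $r\ge 3$ and on a gap of at least two $B$'s when $r=2$ is exactly what is needed for the two exchanged pairs to be disjoint, as Lemma~\ref{symm exchange} requires. Finally, $(ABBA)^m$ is indeed fair, so it descends to the same canonical word, and since each exchange is invertible, $\sim$ is symmetric and transitive, giving $\sigma_f\sim(ABBA)^m$. What your approach buys is an explicit termination argument and a complete classification of the obstruction cases, precisely where the paper's proof is sketchiest (``keep doing this procedure'', ``a similar argument works''); what the paper's peeling argument buys is that it produces the target form $(ABBA)^m$ directly by induction, without passing through an intermediate normal form or appealing to the equivalence-relation properties of $\sim$.
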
 
\begin{proof}
We may  assume that $\sigma_f$ starts with $A$ (if not, we exchange the notations of $A$ and $B$). Then let us claim that the factor $ABBA$ can be extracted to the front without changing probabilities, i.e., $\sigma_f\sim ABBA\, \omega_f$ where $\omega_f$ is a fair word having $4(m-1)$ letters with $|A|=|B|=2(m-1)$. Mathematical induction with this fact leads us to the result that $\sigma_f\sim (ABBA)^m$. 

Observe first that $\sigma_f\sim AB\,\omega_0$ where $\omega_0$ is a word such that $AB\,\omega_0$ is fair. If $\sigma_f=AB\,\omega_0$, then we are done. Suppose not, i.e., $\sigma_f=A\cdots A B \,\omega_1$. We would like to see that $\omega_1$ has a sequel $BA$. If $\omega_1$ would not have $BA$, then the given fair dice should be expressed by $\sigma_f=A\cdots A B A\cdots A B\cdots B$, which is not fair at all. Due to having $BA$ in $\omega_1$, exchanging $AB$ and $BA$ together will hold probabilities, i.e., $\sigma_f\sim A\cdots BA\,\omega_2$, where $\omega_2$ is the word obtained from $\omega_1$ by replacing $BA$ by $AB$. A similar argument works to reveal that $\omega_2$ has $BA$ and therefore $\sigma_f\sim A\cdots BAA\,\omega_3$. Keep doing this procedure until $\sigma_f\sim AB\,\omega_0$.

Next see that $\sigma_f\sim ABBA \,\omega_f$ as claimed. If $\omega_0$ starts with $BA$, then we are done. If not, there are three cases, that is, $\omega_0$ is one of $AB\,\omega_4$, $A\cdots AB\,\omega_5$, $B\cdots BA\,\omega_6$. Since a similar argument can be applied, let us consider the first case when $\omega_0=AB\,\omega_4$. If $\omega_4$ would not have $BA$, then it should be expressed by $A\cdots A B\cdots B$. This means that $\sigma_f\sim ABABA\cdots A B\cdots B$, which is impossible to be fair. So $\omega_4$ has a sequel $BA$. Exchanging $BA$ (in $\omega_4$) to $AB$ (in front of $\omega_4$) shows that $\sigma_f\sim ABBA\,\omega_f$. As mentioned, in the other two cases, a similar argument reveals that $\sigma_f\sim ABBA\,\omega_f$ as claimed. 
\end{proof}

\section{Irreducible set of dice}\label{irreducible}
In this section, we construct irreducible balanced non-transitive sets of $n$-sided dice for any positive integer $n$. Note that this answers Question 5.2 in \cite{SS17}. 

\begin{definition}\label{def:irreducible}
A balanced non-transitive word (and a corresponding set of dice) is called irreducible if there do not exist balanced non-transitive words $\sigma_1$ and $\sigma_2$ (both nonempty) such that $\sigma = \sigma_1\sigma_2$.
\end{definition}
\begin{lemma}\label{baby construction}
Let $\sigma$ be a balanced non-transitive word and $\tau=ABCCBA$. Then $\tau\sigma$ is balanced and non-transitive.
\end{lemma}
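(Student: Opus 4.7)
The plan is to reduce everything to the recursion formula \eqref{recursion formula}. Setting the first factor to $\tau$ (of length $6$, so the role of $m$ is $2$) and the second factor to $\sigma$ (of length $3n$), the formula yields
\[
N_{\tau\sigma}(A>B) = N_\tau(A>B) + N_\sigma(A>B) + 2n,
\]
and the analogous identities for $N_{\tau\sigma}(B>C)$ and $N_{\tau\sigma}(C>A)$. So the proof amounts to controlling the three $N_\tau(\cdot)$ values.

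First I would check directly that $\tau=ABCCBA$ is \emph{fair} in the sense that $N_\tau(A>B) = N_\tau(B>C) = N_\tau(C>A) = 2$. This is immediate from the labeling $A=\{1,6\}$, $B=\{2,5\}$, $C=\{3,4\}$: in each pairwise comparison one obtains exactly $2$ wins out of $4$ rolls. (This is consistent with the remark after Conjecture~\ref{fair dice} that $ABCCBA$ is a fair $2$-sided set.)

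Combining the fairness of $\tau$ with the hypothesis that $\sigma$ is balanced, the three quantities $N_{\tau\sigma}(A>B)$, $N_{\tau\sigma}(B>C)$, $N_{\tau\sigma}(C>A)$ are each equal to $2 + N_\sigma(A>B) + 2n$, so $\tau\sigma$ is balanced. For non-transitivity, note that $\tau\sigma$ has length $3(n+2)$, so the threshold is $(n+2)^2/2 = n^2/2 + 2n + 2$. Since $\sigma$ is non-transitive we have $N_\sigma(A>B) > n^2/2$, and therefore
\[
N_{\tau\sigma}(A>B) \;=\; 2 + N_\sigma(A>B) + 2n \;>\; 2 + \tfrac{n^2}{2} + 2n \;=\; \tfrac{(n+2)^2}{2},
\]
as required (and symmetrically for the other two probabilities).

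There is no real obstacle here: the argument is pure bookkeeping via the recursion formula, and the only structural input is that the prefix $\tau = ABCCBA$ is fair, so that prepending it neither disturbs the balance of $\sigma$ nor weakens the strict inequality defining non-transitivity. The pleasant point is that the constant contribution $N_\tau(\cdot) = 2$ together with the cross term $2n$ exactly accounts for the shift $2n+2$ needed to move from the $n$-sided threshold $n^2/2$ to the $(n+2)$-sided threshold $(n+2)^2/2$.
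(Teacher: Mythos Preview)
Your proof is correct and is essentially identical to the paper's own argument: both verify that $\tau$ is fair (so $N_\tau(\cdot)=2$), invoke the recursion formula \eqref{recursion formula} with $m=2$ to get balance, and then check $2 + N_\sigma(A>B) + 2n > (n+2)^2/2$ for non-transitivity. The only cosmetic difference is that the paper appeals to Lemma~2.4 of \cite{SS17} for the balanced part, whereas you spell it out directly from the recursion.
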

\begin{proof}
$\tau$ is balanced but not non-transitive since $P(A>B)=P(B>C)=P(C>A)= \frac{1}{2}$. Lemma 2.4 in \cite{SS17} implies that $\tau\sigma$ is balanced. It remains to prove that it is non-transitive. Let $|\sigma|=3n$. For $x=A, B,$ or $C$, the equation \eqref{recursion formula} implies
\[
N_{\tau\sigma}(A>B) = 2 + N_{\sigma}(A>B) + 2n > 2 + \frac{n^2}{2} + 2n = \frac{(n+2)^2}{2}.
\]
Therefore $\tau\sigma$ is also non-transitive (Definition \ref{defs w.r.t. N}).
\end{proof}

\begin{lemma}\label{main construction}
Let $\sigma$ be a balanced non-transitive word that corresponds to a set of either $3$-sided or $4$-sided dice and $\tau=ABCCBA$. Then $(\tau)^k\sigma$ is an irreducible balanced non-transitive word.
\end{lemma}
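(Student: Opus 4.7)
My plan is to first verify that $(\tau)^k\sigma$ is balanced and non-transitive---this follows from $k$-fold application of Lemma~\ref{baby construction}---and then to prove irreducibility by supposing $(\tau)^k\sigma=\sigma_1\sigma_2$ with both $\sigma_i$ balanced non-transitive and deriving a contradiction. Since each balanced piece must have equal counts of $A$, $B$, and $C$, the cut position is a multiple of $3$; I will analyze the two possibilities according to whether the cut lies inside $\tau^k$ or strictly inside $\sigma$.

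First, if the cut is inside $\tau^k$, then tracking cumulative letter counts within one copy of $\tau=ABCCBA$ shows the only equal-count prefixes of $\tau$ are $ABC$ (length $3$) and all of $\tau$ (length $6$). So $\sigma_1$ equals either $\tau^j$ or $\tau^j\,ABC$ for some $j$. Using \eqref{recursion formula} inductively, $\tau^j$ is fair and hence not non-transitive; and the direct values $N_{ABC}(A>B)=N_{ABC}(B>C)=0$ and $N_{ABC}(C>A)=1$, together with \eqref{recursion formula}, show $\tau^j\,ABC$ is not balanced. Both possibilities are excluded.

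Second, if the cut falls strictly inside $\sigma$, write $\sigma=\sigma'\sigma''$ with $\sigma'$, $\sigma''$ nonempty, so $\sigma_1=\tau^k\sigma'$ and $\sigma_2=\sigma''$. Two uses of \eqref{recursion formula}, combined with the fact that $\tau^k$ is fair and $\sigma$ is balanced, yield (i) $\sigma'$ is balanced whenever $\sigma''$ is, and (ii) $\tau^k\sigma'$ is non-transitive if and only if $\sigma'$ is non-transitive---the fair prefix $\tau^k$ contributes exactly the non-transitivity threshold $(2k)^2/2$. Consequently such a cut exists only if $\sigma=\sigma'\sigma''$ is itself a splitting of $\sigma$ into balanced non-transitive words, and the task reduces to showing that $\sigma$ is itself irreducible.

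The main obstacle is this last step, which I handle by exhausting the possible piece-lengths $(|\sigma'|,|\sigma''|)$: for $n=3$ they are $(3,6)$ or $(6,3)$, and for $n=4$ they are $(3,9)$, $(6,6)$, or $(9,3)$. A length-$3$ word has one of each letter, and a check of the six permutations of $ABC$ shows none of them is balanced, which rules out every case except $(6,6)$ for $n=4$. For that remaining case I would argue that a balanced non-transitive word of length $6$ forces $N(A>B)=N(B>C)=N(C>A)\in\{3,4\}$; the value $4$ is impossible by an elementary position argument, and for the value $3$ the interleaving characterization ($N(A>B)=3$ forces the two $A$s and two $B$s to occur in the relative order $BABA$, and similarly for the other pairs) produces the cyclic chain $b_1<a_1<c_1<b_1$, a contradiction. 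This completes the proof.
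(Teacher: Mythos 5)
Your proposal is correct and follows essentially the same route as the paper: iterate Lemma \ref{baby construction} to get balance and non-transitivity, then rule out a reducing cut inside $\tau^k$ by noting that $\tau^j$ is fair (hence not non-transitive) and $\tau^j ABC$ is not balanced, leaving only cuts strictly inside $\sigma$. You actually supply more detail than the paper at the last step, where it merely asserts impossibility: your use of \eqref{recursion formula} to transfer balance and non-transitivity from $\tau^k\sigma'$ to $\sigma'$, plus the checks that no length-$3$ word is balanced and no length-$6$ word is balanced and non-transitive, gives a complete justification that $\sigma$ itself cannot split.
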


\begin{proof}
Applying Lemma \ref{baby construction} $k$ times, we conclude that $(\tau)^k\sigma$ is balanced and non-transitive. It remains to prove that it is irreducible. Suppose that $(\tau)^k\sigma$ is reducible. Then we can write $(\tau)^k\sigma$ as $\pi\pi'$ where $\pi$ is an irreducible balanced non-transitive word and $\pi'$ is a balanced non-transitive word. Since $(\tau)^n$ is fair (so not non-transitive) and $(\tau)^n ABC$ is not balanced for any non-negative integer $n$, $\pi$ should be of the form $(\tau)^k \pi''$ ($\pi''$ is not empty). We write $(\tau)^k\sigma$ as $(\tau)^k\pi'' \pi'''$. Since $|\pi'''| < 12$, $\pi'''$ is irreducible. This implies that $\sigma=\pi''\pi'''$ corresponds to a balanced non-transitive set of $4$-sided dice and when it partially has a balanced non-transitive word $\pi'''$ of length less than $12$, which is impossible.  
\end{proof}

We are now ready to construct an irreducible balanced non-transitive set of $n$-sided dice.
\begin{theorem}\label{main construction2}
For any $n \geq 3$, there exists an irreducible balanced non-transitive set of $n$-sided dice.
\end{theorem}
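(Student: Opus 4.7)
The plan is to reduce Theorem~\ref{main construction2} to two base cases, $n=3$ and $n=4$, via Lemma~\ref{main construction}. Since $\tau=ABCCBA$ represents $2$-sided dice, the word $(\tau)^k\sigma$ for a balanced non-transitive $m$-sided $\sigma$ has length $3(2k+m)$ and therefore corresponds to a $(2k+m)$-sided set of dice; Lemma~\ref{main construction} asserts that it is irreducible balanced non-transitive whenever $m\in\{3,4\}$. Every $n\geq 3$ admits a representation $n=2k+m$ with $k\geq 0$ and $m\in\{3,4\}$ (take $m=3$ if $n$ is odd, $m=4$ if $n$ is even), so it suffices to exhibit one balanced non-transitive base word $\sigma_3$ on $3$-sided dice and one balanced non-transitive base word $\sigma_4$ on $4$-sided dice.

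For $\sigma_3$ I would reuse the word from the example in Section~\ref{notation}, namely $\sigma_3=ACBBACCBA$ with $A=\{9,5,1\}$, $B=\{8,4,3\}$, $C=\{7,6,2\}$, which is already observed there to be balanced non-transitive with probabilities $5/9$. Applying Lemma~\ref{main construction} with $k=(n-3)/2$ then handles every odd $n\geq 3$.

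The main obstacle is producing a balanced non-transitive $4$-sided example $\sigma_4$, since none is supplied earlier in the paper; this step requires a bit of search but is otherwise routine. A candidate I would propose is
\[
A=\{1,4,10,11\},\quad B=\{2,7,8,9\},\quad C=\{3,5,6,12\},
\]
whose associated word is $\sigma_4=ABCACCBBBAAC$. A direct computation gives
\[
N_{\sigma_4}(A>B)=N_{\sigma_4}(B>C)=N_{\sigma_4}(C>A)=9,
\]
so each of the three probabilities equals $9/16>1/2$, verifying that $\sigma_4$ is balanced non-transitive. Once both $\sigma_3$ and $\sigma_4$ are in hand, Lemma~\ref{main construction} (applied with $k=(n-4)/2$ in the even case) finishes the argument with no further work, delivering an irreducible balanced non-transitive word of length $3n$ for every $n\geq 3$.
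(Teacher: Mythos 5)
Your proposal is correct and follows essentially the same route as the paper: reduce to the base cases $n=3$ and $n=4$ via Lemma~\ref{main construction} (writing $n=2k+3$ or $n=2k+4$), with the same $3$-sided example and only a different explicit $4$-sided example. Your candidate $A=\{1,4,10,11\}$, $B=\{2,7,8,9\}$, $C=\{3,5,6,12\}$ indeed satisfies $N(A>B)=N(B>C)=N(C>A)=9$, so it serves the same role as the paper's $D_4$.
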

\begin{proof}
We first consider the following two sets of dice:
$$
\begin{matrix}
A_3=9 & 5 & 1\\
B_3=8 & 4 & 3\\
C_3=7 & 6 & 2\\
\end{matrix}
\qquad\textrm{and}\qquad
\begin{matrix}
A_4= 10 & 7 & 5 & 4\\
B_4= 12 & 9 & 3 & 2\\
C_4= 11 & 8 & 6 & 1\\
\end{matrix}. 
$$
Both $D_3=(A_3, B_3, C_3)$ and $D_4=(A_4, B_4, C_4)$ are irreducible, balanced and non-transitive. As before we consider $\tau=ABCCBA$.
We now construct an irreducible balanced non-transitive set of $n$-sided dice for any $n \geq 3$. 
We already constructed such set of dice when $n=3$ and $n=4$.
When $n \geq 5$ is odd, we write $n = 3+ 2k, k \geq 1$. Then $(\tau)^kD_3$ is an irreducible balanced non-transitive set of $n$-sided dice due to Lemma \ref{main construction}. When $n \geq 5$ is even, we write $n = 4 + 2k, k \geq 1$. Then $(\tau)^kD_4$ is an irreducible balanced non-transitive set of $n$-sided dice due to Lemma \ref{main construction}.
\end{proof}

\section{Possible probability}\label{probability}
Let $(A,B,C)$ be a balanced non-transitive set of $n$-sided dice with $P(A>B)=P(B>C)=P(C>A)> \frac{1}{2}$. In this section our interest is on a possible probability $P(A>B)$. 
Let us state what is in our mind. 
\begin{conjecture}\label{conjecture prob}
If $(A, B, C)$ is a balanced non-transitive set of $n$-sided dice such that $P(A>B) > \frac{1}{2}$, then 
$$ \left( \frac{1}{2} < \,\right) \,\, P(A>B) < \frac{1}{2} + \frac{1}{9}.$$
\end{conjecture}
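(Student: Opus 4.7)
The plan is to recast the conjecture as a combinatorial inequality on pattern counts and then exploit the balanced condition. Consider all $n^3$ triples $(a,b,c) \in A\times B\times C$. Among the six possible relative orderings of such a triple, exactly three --- the cyclic rotations of $a>b>c$, namely $a>b>c$, $b>c>a$, $c>a>b$ --- contribute two ``forward'' wins to the oriented cycle $A\to B\to C\to A$, while the other three contribute only one. Letting $k$ be the total number of forward triples and summing over all triples,
\begin{equation*}
n\bigl(N(A>B)+N(B>C)+N(C>A)\bigr) \;=\; 2k + (n^3-k) \;=\; n^3 + k,
\end{equation*}
so a balanced set with common probability $p$ satisfies $p = \tfrac13 + k/(3n^3)$. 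The conjectured bound $p < \tfrac12 + \tfrac19 = \tfrac{11}{18}$ is therefore equivalent to the combinatorial inequality $k < \tfrac56 n^3$.

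Next I would partition $k = k_1+k_2+k_3$ by the three forward orderings and $\ell = \ell_1+\ell_2+\ell_3$ by the backward ones; the six numbers $k_i,\ell_j$ are exactly the counts of the six length-three one-of-each-letter patterns (cyclic rotations of $CBA$ and of $ABC$, respectively) in the word $\sigma(D)$. Rewriting the balanced equalities $N(A>B)=N(B>C)=N(C>A)$ in these variables yields two linear relations among the $k_i,\ell_j$, and combined with $k+\ell=n^3$ this leaves a three-parameter family. Non-negativity alone gives only the weak bound $p\le 2/3$, so the plan is to add a non-linear constraint --- for instance a Cauchy--Schwarz or flag-algebra-style relation among pattern densities, or a sharpening obtained by summing local contributions along the word --- which uses the fact that $A\cup B\cup C$ is a genuine partition of $\{1,\dots,3n\}$ rather than three abstract distributions. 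As a complementary route I would try to show that prepending a fair block such as $\tau=ABCCBA$ strictly decreases $p$, so that the supremum is realized by irreducible words built on a fixed small ``core'' as in Section \ref{irreducible}, and then quantify how far from $\tfrac{11}{18}$ such words can get.

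The main obstacle I anticipate is that known bounds for general (not necessarily balanced) non-transitive triples sit near $(\sqrt{5}-1)/2 \approx 0.618$, which already lies \emph{above} the conjectured $\tfrac{11}{18}\approx 0.611$; hence the proof must use the balanced hypothesis essentially and must locate an inequality sharp enough to detect the remaining $\approx 0.007$ gap. Because the paper's explicit construction attains $p \approx \tfrac12 + \tfrac{1}{9.12}$, which is strictly below but tantalisingly close to the target, any full proof will likely have to identify the precise extremal profile rather than rely on an asymptotic estimate. As a fallback, I would verify the bound computationally for small $n$, study the resulting extremal balanced dice to guess the missing inequality, and --- since the paper presents this as a conjecture --- settle for proving the bound only within restricted classes (such as the family produced by Lemma \ref{main construction}, for which an explicit computation using \eqref{recursion formula} should suffice).
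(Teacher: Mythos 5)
You have correctly recognized that this statement is a conjecture: the paper does not prove it, so there is no proof of record to compare against. What the paper actually does is (i) verify the bound for the specific families it can control --- the words $(\tau)^k\tau_i$ of Theorem \ref{main construction2}, computed exactly via the recursion \eqref{recursion formula}, and the constructions of \cite{SS17}, handled via Lemmas \ref{recursion formula2} and \ref{recursion formula3} --- and (ii) support sharpness by a greedy ``exchange'' construction whose probability tends to $\frac12+\frac{13-\sqrt{153}}{24}\approx\frac12+\frac{1}{9.12}$. Your fallback plan (verify within restricted classes, e.g.\ the family of Lemma \ref{main construction}, using \eqref{recursion formula}) is essentially what the paper carries out, so in that limited sense your route and the paper's evidence coincide.

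Your reformulation is correct and worth keeping: counting, for each triple $(a,b,c)\in A\times B\times C$, the indicator sum $[a>b]+[b>c]+[c>a]\in\{1,2\}$ gives $n\bigl(N(A>B)+N(B>C)+N(C>A)\bigr)=n^3+k$, hence for a balanced set $p=\frac13+\frac{k}{3n^3}$, and the conjecture is equivalent to $k<\frac56 n^3$. Your warning is also apt that the unconditional non-transitive bound (the golden-ratio constant $\approx 0.618$) sits above $\frac{11}{18}\approx 0.611$, so any proof must use balancedness essentially. The genuine gap is exactly where you flag it: after the two linear relations from $N(A>B)=N(B>C)=N(C>A)$ and $k+\ell=n^3$, non-negativity only yields $p\le\frac23$, and you do not exhibit the additional (necessarily non-linear, partition-dependent) inequality that would force $k<\frac56 n^3$; ``a Cauchy--Schwarz or flag-algebra-style relation'' is named but not produced, and your other suggested route (showing that prepending $\tau=ABCCBA$ strictly decreases $p$) cannot by itself bound the irreducible ``cores,'' which is where the whole difficulty lies. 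So the proposal is a sound research plan with a correct and potentially useful identity, but it does not prove the statement --- which is consistent with the paper, where the bound remains open.
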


\subsection{Probabilities of our construction in Theorem \ref{main construction2}}
To support Conjecture \ref{conjecture prob}, we first calculate all possible probabilities of the set of dice which are constructed in Section \ref{irreducible}.

\begin{lemma}\label{recursion formula2}
Let $\sigma$ (resp. $\tau$) be a balanced non-transitive word of length $3m$ (resp. $3n$). Then 
\[
P_{\sigma\tau}(A>B) = \frac{1}{2} + \frac{\left(N_{\sigma}(A>B) - \frac{m^2}{2}\right) + \left(N_{\tau}(A>B) - \frac{n^2}{2}\right)}{(m+n)^2}.
\]
Similarly, we have 
\[
P_{\sigma\tau}(B>C) = \frac{1}{2} + \frac{\left(N_{\sigma}(B>C) - \frac{m^2}{2}\right) + \left(N_{\tau}(B>C) - \frac{n^2}{2}\right)}{(m+n)^2}
\]
and
\[
P_{\sigma\tau}(C>A) = \frac{1}{2} + \frac{\left(N_{\sigma}(C>A) - \frac{m^2}{2}\right) + \left(N_{\tau}(C>A) - \frac{n^2}{2}\right)}{(m+n)^2}.
\]
\end{lemma}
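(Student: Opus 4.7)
The plan is to reduce the identity to a one-line algebraic rearrangement of the concatenation recursion \eqref{recursion formula}. By Definition \ref{defs w.r.t. N}, $P_{\sigma\tau}(A>B)=N_{\sigma\tau}(A>B)/(m+n)^2$, so equation \eqref{recursion formula} immediately gives
\[
P_{\sigma\tau}(A>B) = \frac{N_{\sigma}(A>B)+N_{\tau}(A>B)+mn}{(m+n)^2}.
\]
Now I would observe the elementary identity $mn = \tfrac{(m+n)^2}{2} - \tfrac{m^2}{2} - \tfrac{n^2}{2}$, which after substitution and splitting the $\tfrac{(m+n)^2}{2}$ off as $\tfrac{1}{2}$ yields exactly the claimed formula for $P_{\sigma\tau}(A>B)$.

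For the two remaining equalities I would simply note that the recursion \eqref{recursion formula} was stated for $A>B$ only for notational convenience; the same counting argument in \cite{SS17} produces the analogous identities
\[
N_{\sigma\tau}(B>C) = N_{\sigma}(B>C)+N_{\tau}(B>C)+mn,
\]
\[
N_{\sigma\tau}(C>A) = N_{\sigma}(C>A)+N_{\tau}(C>A)+mn,
\]
since the argument is invariant under cyclically relabeling the three dice. Then applying the same algebraic rearrangement to each of these gives the two remaining formulas.

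There is really no obstacle here: the ``balanced non-transitive'' hypothesis on $\sigma$ and $\tau$ is not used in the derivation (it plays its role only when one wants to apply the lemma to bound $P_{\sigma\tau}(A>B)$ away from $1/2$, since then each parenthesized term $N_{\sigma}(A>B)-m^2/2$ and $N_{\tau}(A>B)-n^2/2$ is strictly positive). The only thing worth being slightly careful about is to cite or remark on the symmetry that extends \eqref{recursion formula} from $A>B$ to $B>C$ and $C>A$; once that is in place, the three displayed equations are parallel and follow by identical algebra.
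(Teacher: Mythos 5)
Your proposal is correct and matches the paper's proof, which likewise derives the identity by dividing the concatenation recursion \eqref{recursion formula} by $(m+n)^2$ (the paper simply calls it an easy consequence of $N_{\sigma\tau}(A>B)=(m+n)^2P_{\sigma\tau}(A>B)$). Your extra remarks — the identity $mn=\tfrac{(m+n)^2}{2}-\tfrac{m^2}{2}-\tfrac{n^2}{2}$, the symmetric versions of the recursion for $B>C$ and $C>A$, and the observation that the balanced non-transitive hypothesis is not needed for the algebra — are accurate elaborations of the same argument.
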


\begin{proof}
It is an easy consequence of the equation \eqref{recursion formula} since $N_{\sigma\tau}(A>B) = (m+n)^2 P_{\sigma\tau}(A>B)$.
\end{proof}
\begin{remark}
Note that $N_{\sigma}(A>B)$ represents the number of consequences that a die $A$ wins a die $B$. Therefore, $\left(N_{\sigma}(A>B) - \frac{m^2}{2}\right)$ in Lemma \ref{recursion formula2} represents how far it is away from being fair for a word $\sigma$. For example, if $N_{\sigma}(A>B) - \frac{m^2}{2}=0$, then $P_{\sigma}(A>B)= \frac{1}{2}$.
\end{remark}

\begin{lemma}\label{recursion formula3}
Let $\sigma$ and $\tau$ be as above. Assume that $\frac{1}{2} <P_{\sigma}(A>B)<a$ and $\frac{1}{2}<P_{\tau}(A>B)<b$. Then $\frac{1}{2} < P_{\sigma\tau}(A>B) < max \{ a, b \}$.
\end{lemma}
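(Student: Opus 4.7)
The plan is to reduce everything to Lemma \ref{recursion formula2} by rewriting the numerator in terms of the quantities $P_\sigma(A>B)-\tfrac12$ and $P_\tau(A>B)-\tfrac12$. Since $N_\sigma(A>B) = m^2 P_\sigma(A>B)$ and $N_\tau(A>B) = n^2 P_\tau(A>B)$, the identity from Lemma \ref{recursion formula2} becomes
\[
P_{\sigma\tau}(A>B) - \tfrac{1}{2} \;=\; \frac{m^2\bigl(P_\sigma(A>B)-\tfrac{1}{2}\bigr) + n^2\bigl(P_\tau(A>B)-\tfrac{1}{2}\bigr)}{(m+n)^2}.
\]
This exhibits $P_{\sigma\tau}(A>B) - \tfrac12$ as a nonnegative linear combination of the two ``excesses'' $P_\sigma(A>B)-\tfrac12$ and $P_\tau(A>B)-\tfrac12$, which is the shape I want.

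For the lower bound, I would simply observe that by hypothesis both excesses are strictly positive, so the right-hand side is strictly positive, giving $P_{\sigma\tau}(A>B) > \tfrac{1}{2}$.

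For the upper bound, set $M := \max\{a,b\}$. From $P_\sigma(A>B) < a \le M$ and $P_\tau(A>B) < b \le M$ I get $P_\sigma(A>B)-\tfrac12 < M-\tfrac12$ and $P_\tau(A>B)-\tfrac12 < M-\tfrac12$, so plugging in yields
\[
P_{\sigma\tau}(A>B) - \tfrac{1}{2} \;<\; \frac{m^2 + n^2}{(m+n)^2}\bigl(M - \tfrac{1}{2}\bigr).
\]
The key elementary fact is that $(m+n)^2 = m^2 + 2mn + n^2 > m^2 + n^2$ for positive integers $m,n$, so the ratio $(m^2+n^2)/(m+n)^2$ is strictly less than $1$. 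This gives $P_{\sigma\tau}(A>B) - \tfrac12 < M - \tfrac12$, i.e.\ $P_{\sigma\tau}(A>B) < \max\{a,b\}$, as required.

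There is no real obstacle here; the only thing to be careful about is bookkeeping the strict inequalities, and noting that one uses $m,n \ge 1$ (so the denominator is genuinely larger than the numerator) — this is automatic since $\sigma$ and $\tau$ both correspond to nonempty sets of dice. The same argument applies verbatim with $(A>B)$ replaced by $(B>C)$ or $(C>A)$, should one want the analogous statement for the other two probabilities.
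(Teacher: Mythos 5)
Your proof is correct and is essentially the paper's argument: both rest on the recursion $(m+n)^2 P_{\sigma\tau}(A>B)=m^2P_{\sigma}(A>B)+n^2P_{\tau}(A>B)+mn$, yours merely re-centered at $\tfrac12$ via Lemma \ref{recursion formula2} so that the cross term $mn$ is absorbed into the factor $(m^2+n^2)/(m+n)^2<1$ rather than handled by the paper's explicit use of $\max\{a,b\}>\tfrac12$. A small bonus on your side is that you also spell out the strict lower bound $P_{\sigma\tau}(A>B)>\tfrac12$, which the paper's proof leaves implicit.
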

\begin{proof}
\[
(m+n)^2 P_{\sigma\tau}(A>B) = m^2 P_{\sigma}(A>B) + n^2 P_{\tau}(A>B)+mn <
\]
\[ max \{a, b\} (m^2+n^2 + \frac{1}{ max\{a, b\}}mn) <  max \{a, b\} (m+n)^2
\]
since $\frac{1}{2} < max \{a, b\}$.
\end{proof}

We are now ready to calculate the probability $P(A>B)$. Let $\sigma$ be a word that is constructed in Theorem \ref{main construction2}. Then $\sigma$ is either $(\tau)^k\tau_1$ or $(\tau)^k\tau_2$ for some non-negative integer $k$, where
\[
D(\tau_1)= 
\left\{ \begin{matrix}
9 & 5 & 1\\
8 & 4 & 3\\
7 & 6 & 2\\
\end{matrix} \right., \ \ 
D(\tau_2)= 
\left\{ \begin{matrix}
10 & 7 & 5 & 4\\
12 & 9 & 3 & 2\\
11 & 8 & 6 & 1
\end{matrix} \right., \ \
D(\tau)= 
\left\{ \begin{matrix}
6 & 1\\
5 & 2\\
4 & 3
\end{matrix} \right.
\]
Note that $(\tau)^k$ is a fair word (i.e., $P_{\tau^k}(A>B)=\frac{1}{2}$).
For $i=1, 2$, Lemma \ref{recursion formula} implies that 
\[
(2k+n_i)^2P_{\sigma}(A>B) =  (2k)^2P_{\tau^k}(A>B) + (n_i)^2 P_{\tau_i}(A>B) + 2kn_i= 
\]
\[
\frac{(2k)^2}{2} + \big[ \frac{n_i^2+2}{2}\big] + 2kn_i = \big[ \frac{(n_i+2k)^2+2}{2}\big]  
\]
which is closest integer greater than $\frac{(n_i+2k)^2}{2}$. Here, $3n_i$ is the length of $\tau_i$ and, therefore, $n_i = i+2$.

Therefore, Conjecture \ref{conjecture prob} is true in this case.
\begin{remark}
Similarly, we can also calculate possible probabilities of the set of dice that is constructed in \cite{SS17}.
Let $\sigma$ be a word that is constructed in Theorem 2.1 of \cite{SS17}. Then $\sigma$ is a product of several $\tau_1, \tau_2$, and $\tau_3$, where
\[
D(\tau_1)= 
\left\{ \begin{matrix}
9 & 5 & 1\\
8 & 4 & 3\\
7 & 6 & 2\\
\end{matrix} \right., \ \ 
D(\tau_2)= 
\left\{ \begin{matrix}
12 & 10 & 3 & 1\\
9 & 8 & 7 & 2\\
11 & 6 & 5 & 4
\end{matrix} \right., \ \ 
D(\tau_3)= 
\left\{ \begin{matrix}
15 & 11 & 7 & 4 & 3\\
14 & 10 & 9 & 5 & 2\\
13 & 12 & 8 & 6 & 1 
\end{matrix} \right.
\]
$P_{\tau_i}(A>B) = \frac{[ \frac{n_i^2 +2}{2} ]}{n_i^2}$ for $i=1, 2, 3$, where $n_i = (i+2)^2$. Therefore, Lemma \ref{recursion formula3} implies that $
P_{\sigma}(A>B) \leq \frac{5}{9} = \frac{1}{2} + \frac{1}{18}$. Therefore, Conjecture \ref{conjecture prob} is true in this case.
\end{remark}

\subsection{Best bound for probability}

We first show that $\frac12$ is the greatest lower bound of $P(A>B)$, which means that the inequality $P(A>B)>\frac12$ is optimal. 
\begin{theorem}
There exists a balanced non-transitive set of dice such that $P(A>B)-\frac12>0$ becomes arbitrary small.
\end{theorem}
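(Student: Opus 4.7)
The plan is to exploit the fair word $\tau = ABCCBA$ from Section \ref{fair sets}, which by Lemma \ref{baby construction} preserves the balanced non-transitive property when prepended, together with the additive formula in Lemma \ref{recursion formula2}. The idea is that concatenating with a fair word inflates the denominator $(m+n)^2$ without changing the ``excess'' $N(A>B) - m^2/2$, so repeatedly prepending $\tau$ dilutes the probability toward $\tfrac12$ from above.

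Concretely, I would fix any balanced non-transitive word $\sigma$ of length $3m$, for instance the $3$-sided set $D_3$ from Theorem \ref{main construction2} with $|\sigma|=9$ and $N_\sigma(A>B)=5$. For each $k\geq 1$, form $\sigma_k := \tau^k \sigma$. Iterating Lemma \ref{baby construction} $k$ times shows that $\sigma_k$ is balanced and non-transitive. Since $\tau^k$ is fair with $N_{\tau^k}(A>B) = (2k)^2/2$, Lemma \ref{recursion formula2} applied to $\sigma' = \tau^k$ and $\sigma$ gives
\[
P_{\sigma_k}(A>B) \;=\; \frac{1}{2} \;+\; \frac{N_{\sigma}(A>B)-m^2/2}{(2k+m)^2}.
\]
The numerator is a fixed positive constant (positive because $\sigma$ is non-transitive, so $N_\sigma(A>B) > m^2/2$), and the denominator grows like $k^2$. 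Hence $P_{\sigma_k}(A>B) \searrow \tfrac12$ as $k \to \infty$, and by balancedness the same limit holds for $P_{\sigma_k}(B>C)$ and $P_{\sigma_k}(C>A)$. Given any $\varepsilon>0$, choosing $k$ large enough yields a balanced non-transitive set of $(2k+m)$-sided dice with $P(A>B) - \tfrac12 < \varepsilon$.

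There is no real obstacle here beyond bookkeeping: the only thing to check is that $N_\sigma(A>B) > m^2/2$ strictly, which is precisely the non-transitivity of the seed $\sigma$, and that the resulting word remains balanced non-transitive at every stage, which is exactly Lemma \ref{baby construction}. The construction also makes clear \emph{why} $\tfrac12$ is the sharp lower bound: any mixing with a fair block shrinks the deviation from $\tfrac12$ quadratically in the length of the fair block.
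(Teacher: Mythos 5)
Your proof is correct, and it reaches the same quantitative conclusion ($P(A>B)=\tfrac12+O(1/n^2)$) by a somewhat different route than the paper. The paper's own proof writes down a bespoke word for odd $n=2m+1$, namely $(ACBCBA)(ABCCBA)^{m-1}(BAC)$, checks balance by a local swap of $BC$ and $CB$, and counts $N_\sigma(A>B)=2m^2+2m+1$ directly, getting exactly $\tfrac12+\tfrac{1}{2n^2}$. You instead reuse the machinery already in the paper: the seed $D_3$, the fair block $\tau=ABCCBA$, Lemma \ref{baby construction} iterated to keep $\tau^k\sigma$ balanced non-transitive, and the recursion \eqref{recursion formula} to get $P_{\tau^k\sigma}(A>B)=\tfrac12+\bigl(N_\sigma(A>B)-m^2/2\bigr)/(2k+m)^2$; in fact this is literally the family $(\tau)^k D_3$ of Theorem \ref{main construction2}, whose probability the paper itself computes in Section \ref{probability} to be $\bigl[\tfrac{(m+2k)^2+2}{2}\bigr]/(m+2k)^2$, consistent with your formula. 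What your route buys is generality and economy: any non-transitive seed works, the positivity of the excess $N_\sigma(A>B)-m^2/2$ is exactly non-transitivity, and no new counting is needed; what the paper's route buys is an explicit self-contained word and an exact value without invoking the concatenation lemmas. One cosmetic point: Lemma \ref{recursion formula2} is stated for two balanced \emph{non-transitive} words, while your $\tau^k$ is fair; this is harmless because the identity you need is just equation \eqref{recursion formula} together with $N_{\tau^k}(A>B)=2k^2$, which is how the paper itself handles $(\tau)^k\tau_i$, but it is cleaner to cite \eqref{recursion formula} directly rather than the lemma.
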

\begin{proof}
Let $n=2m+1\,\,\,(m\in\mathbb{N})$. Then we construct a word $\sigma$ (or a corresponding dice) as follows. First consider the word $(ABCCBA)^m (BAC)$ which is not balanced, since the number of the events for $C$ to beat $A$ or $B$ is one more than the ones of the events for $A$ or $B$ to do $C$ (therefore $C$ always beats both $A$ and $B$ at least in probability). To change this word to a balanced one, let us replace $BC$ by $CB$ on the first factor $ABCCBA$. In all, the word $\sigma$ obtained by  
\begin{equation*}
 \sigma:=(ACBCBA)(ABCCBA)^{m-1}(BAC)
\end{equation*}
is balanced. A direct computation of $P(A>B)$ on $\sigma$, or more precisely, 
\begin{equation*}
    N_{\sigma(D)}(A>B)=0+2+\{(2+4)+\cdots+(2(m-1)+2m)\}+(2m+1)=2m^2+2m+1,
\end{equation*} 
shows that 
 \begin{equation*}
     P(A>B)=\frac{2m^2+2m+1}{(2m+1)^2}=\frac12+\frac{0.5}{n^2},
 \end{equation*}
which goes to $\frac12$ as $n\to\infty$. This indicates that  $\frac12$ is the greatest lower bound of $P(A>B)$ and the inequality $P(A>B)>\frac12$ could not get better.
\end{proof}
\medskip
 
For the upper bound of $P(A>B)$, recall that Conjecture \ref{conjecture prob} states that 
\begin{equation*}
  P(A>B) < \frac{1}{2} + \frac{1}{9}.
\end{equation*}
That is, the probability $P(A>B)$ cannot exceed (nor be equal to) $\frac{11}{18}$. 
To support this inequality, we provide three new constructions of sets of dice of which probability is close to $\frac12 + \frac19$.
For simplicity, assume that $n=6p$ (later we discuss when $n=6p+2$ or $n=6p+4$ which is similar to the presenting case).
Let us start with the most unmixed fair $n$-sided dice $\sigma_f$,  
\begin{equation}\label{unmixed fair}
   \sigma_f:= A\cdots A \, B\cdots B \, C\cdots C \, C\cdots C \, B\cdots B \, A\cdots A,
\end{equation}
where ``$\cdots$" means that all the letters are the same as the boundary letters and the numbers of letters in ``$\cdots$" are the same, i.e., $|A\cdots A|=|B\cdots B|=|C\cdots C|=3p$. 

We apply the following algorithm to construct most probability $P(A>B)$:
\noindent
\begin{algorithm} \quad
\begin{itemize}
    \item[(Step1)]  If there is no triple $AB, BC, CA$ in a word, then replace $xy$ and $yx$ in a word for $x, y \in \{A, B, C\}$ so that the resulting word contains $AB, BC, CA$.
    \item[(Step2)] If you find $AB, BC, CA$, then replace $AB, BC, CA$ by $BA, CB, AC$ respectively, but all together. 
   \end{itemize}
\end{algorithm}

\noindent
\begin{remark} \quad
\begin{itemize}
    \item Replacement in (Step1) does not change the probability due to Lemma \ref{symm exchange}.
    \item Replacement in (Step2) increases the number of events for $A$ (resp. $B$ or $C$) to beat $B$ (resp. $C$ or $A$) by 1. In particular, doing this replacement keeps the words balanced and non-transitive. Therefore we may expect that keeping replacing in this way would lead most probability of $P(A>B)$. 
\end{itemize}
\end{remark}


Since there is no $CA$ in $\sigma_f$, we first apply (Step1) (exchange of $BC$ and $CB$) several times to $\sigma_f$ in order to obtain a word $\sigma_1$ which is still fair but having sequels $CA$ by
\begin{eqnarray}\label{sigma1}
  \sigma_1:= &&  A\cdots A \, A\cdots A \, A\cdots A \, B\cdots B \, B\cdots B \, C\cdots C \\ \nonumber
  && \,\, C\cdots C \, C\cdots C \, {\bf {\color{red} B\cdots B}} \, C\cdots C \, C\cdots C \, B\cdots B \\  \nonumber 
  && \,\,\, B\cdots B \, B\cdots B \, {\bf {\color{red} C\cdots C}} \, A\cdots A \, A\cdots A \, A\cdots A \nonumber
\end{eqnarray}
(i.e., the last one third of the first $B\cdots B$ in \eqref{unmixed fair} moves to the middle and the last one third of the second $C\cdots C$ in \eqref{unmixed fair} does to the front of the second $A\cdots A$ (the moved ones are in red and bold above). Here, $|A \cdots A|=|B \cdots B|=|C \cdots C|=p$ in \eqref{sigma1}. Therefore the numbers of replacing $BC$ and $CB$ by $CB$ and $BC$ respectively are the same as $3p^2$.) 

We now apply (Step2) $3p^2$ times to obtain a balanced non-transitive word $\sigma_2$ by
\begin{eqnarray*}
  \sigma_2:= && {\bf {\color{red} B\cdots B}} \, A\cdots A \, A\cdots A \, A\cdots A \,  C\cdots C \, C\cdots C \\
  && \,\, C\cdots C \, {\bf {\color{red} B\cdots B}} \, B\cdots B \, C\cdots C \,  C\cdots C  \, B\cdots B \\
  && \,\,\, B\cdots B \, B\cdots B  \, A\cdots A \, A\cdots A \, A\cdots A \, {\bf {\color{red} C\cdots C}} 
\end{eqnarray*}
(i.e., in \eqref{sigma1} the first $B\cdots B$ moves in front, the second $B\cdots B$ does to almost middle and the last $C\cdots C$ does to the end. The moved ones are in red and bold as before.) Here the probability $P(A>B)$ of $\sigma_2$ increases to  $1/2+3p^2/(6p)^2=7/12$. 

Since there is no $AB$ and $CA$ in $\sigma_2$, we apply (Step1) $6p$ times as follows;
we move $B$ and $C$ in $\sigma_2$ to the ones in $\sigma_3$ (all of which are in bold below) in order to produce the products $AB$ and $CA$.
\begin{eqnarray*}
  \sigma_2:= && B\cdots B \, A\cdots A \, A\cdots A \, A\cdots A \,  C\cdots C \, C\cdots C \\
  && \,\, C\cdots C \, {\bf {\color{red} B}}\cdots B \, B\cdots B \, C\cdots C \,  C\cdots {\bf {\color{red} C}}  \, B\cdots B \\
  && \,\,\, B\cdots B \, B\cdots B  \, A\cdots A \, A\cdots A \, A\cdots A \, C\cdots C 
\end{eqnarray*}
\begin{eqnarray*}
  \sigma_3:= && B\cdots B \, A\cdots A \, A\cdots A \, A\cdots A \, {\bf {\color{red} B}}\, C\cdots C \, C\cdots C \\
  && \,\, C\cdots C \, {\dot B}\cdots {\color{blue}{{\ddot B} \, {\ddot B} \cdots {\ddot B} \, {\ddot C}\cdots {\ddot C} \,  {\ddot C}}}\cdots {\dot C}  \, B\cdots B \\
  && \,\,\, B\cdots B \, B\cdots B \, {\bf {\color{red} C}} \, A\cdots A \, A\cdots A \, A\cdots A \, C\cdots C\\
  && (\textrm{${\dot B}$ and ${\dot C}$ have been moved to ${\bf {\color{red} B}}$ and ${\bf {\color{red} C}}$, respectively.})
\end{eqnarray*}
This means that we need to exchange $BC$ to $CB$ in the middle part (which is blue and has two dots above the character on $\sigma_3$) $6p$ times ($3p$ to move $B$ and another $3p$ to do $C$). Then to increase $P(A>B)$, one more exchange from $BC$ to $CB$ in the middle part is necessary. 
 
In addition to this observation, we would like to measure how many times we can apply (Step2) to $\sigma_3$ in terms of $p$. (This is because the denominator of $P(A>B)$ is $(6p)^2$, so we would better express in $p$ in order to examine asymptotic behavior of $P(A>B)$.)
 Put by $m$ the number of further replacement such that $m$-many $B$'s move between $B\cdots B$ and $A\cdots A$ in front. For this we need extra $3pm$-many exchanges of $BC$ and $CB$ in the middle part. Since the remaining of each $B$'s or $C$'s in the middle is $2p-m$, the possible exchange of $BC$ to $CB$ is $(2p-m)^2$, which should be at least $9pm(=3pm+3pm+3pm)$ (the last $3pm$ is due to move $m$-many $B$'s in the further replacement). As a summary, we have that  
\begin{eqnarray*}
 && (2p-m)(2p-m)\ge 9pm \quad (m\leq 2p) \\
 && \quad \Longrightarrow \quad  m\leq \frac{13-\sqrt{154}}{2} \, p.
\end{eqnarray*}
Therefore the largest probability on $P(A>B)$ in the argument above is 
\begin{equation*}
    \frac12+\frac{3p^2+\frac{13-\sqrt{154}}{2} \, p \cdot 3p}{(6p)^2}=\frac12+\frac{15-\sqrt{154}}{24} \approx \frac12+\frac{1}{9.25} < \frac12+\frac19.
\end{equation*}
Note that the maximum $m$ (in $\mathbb N$) means that, since all $BC$ are exhausted in the last word, the probability $P(A>B)$ is not able to be larger in this construction.
\medskip

The cases when $n=6p+2$ and $n=6p+4$ can be investigated by a similar procedure. By applying (Step1) (or exchanging $BC$ by $CB$) and (Step2) (or replacing $AB$, $BC$ and $CB$ by $BA$, $CB$ and $BC$), the probability $P(A>B)$ increases by 
\begin{equation*}
    \frac{p(3p+1)}{[2(3p+1)]^2} \quad (\textrm{when } n=6p+2) \,\,\textrm{ or }\,\, \frac{p(3p+2)}{[2(3p+2)]^2} \quad (\textrm{when } n=6p+4)
\end{equation*}
(which is similar to the increase $p(3p)/(6p)^2$ when $n=6p$). 
In these cases, after doing this procedure, new words are expressed by
\begin{eqnarray*}
 n=6p+2: \,\,\sigma_2= && B\cdots B A\cdots A A\cdots A A\cdots A A {\bf {\color{red}B}} C\cdots C C\cdots C  \\
 && \,\, C\cdots C C B\cdots B B\cdots B C\cdots C C\cdots C C \cdots B\\
 && \,\,\,\, B\cdots B B\cdots B B A\cdots A A\cdots A A\cdots A A C\cdots C \\
 n=6p+4: \,\,\sigma'_2= && B\cdots B A\cdots A A\cdots A A\cdots A AA {\bf {\color{red}BB}} C\cdots C C\cdots C  \\
 && \,\, C\cdots C CC B\cdots B B\cdots B C\cdots C C\cdots C CC \cdots B\\
 && \,\,\,\, B\cdots B B\cdots B BB A\cdots A A\cdots A A\cdots A AA C\cdots C
\end{eqnarray*}
(Here each $\cdots$ has exactly $p-2$ letters. For example $|A\cdots A|=p-2+2=p$.)

Next, to make $P(A>B)$ most, we do (Step2) further as in the previous case. Shortly speaking, when $n=6p+2$, we have that 
\begin{eqnarray*}
 && (2p-m)(2p+1-m)-(3p+1) \ge 3m(3p+1) \quad (m\leq 2p) \\
 && \quad \Longleftrightarrow \quad m^2-(13p+4)m+(4p^2-p-1) \ge 0\\
 && \quad \Longrightarrow \quad  m\leq \frac{13p+4-\sqrt{153p^2+108p+20}}{2}.
\end{eqnarray*}
Hence the increased probability on $P(A>B)$ in the argument (i.e., except $1/2$) is 
\begin{equation*} 
    \frac{\left(p+\frac{13p+4-\sqrt{153p^2+108p+20}}{2}\right)(3p+1)}{(6p+2)^2} \xrightarrow{p\to\infty}\frac{1}{12}+\frac{13-\sqrt{153}}{24} <\frac{1}{9.12} <\frac19.
\end{equation*}
Note that, since the numerator $13p+4-\sqrt{153p^2+108p+20}$ is increasing in $p$, the last inequality holds for all $p$.  

Similarly when $n=6p+4$, we have that 
\begin{eqnarray*}
 && (2p-m)(2p+2-m)-2(3p+2) \ge 3m(3p+2) \quad (m\leq 2p) \\
 && \quad \Longleftrightarrow \quad m^2-(13p+8)m+(4p^2-2p-4) \ge 0\\
 && \quad \Longrightarrow \quad  m\leq \frac{13p+8-\sqrt{153p^2+216p+80}}{2}.
\end{eqnarray*}
Hence the increased probability on $P(A>B)$ for $n=6p+4$ (again except $1/2$) is 
\begin{equation*}
    \frac{\left(p+\frac{13p+8-\sqrt{153p^2+216p+80}}{2}\right)(3p+2)}{(6p+4)^2} \xrightarrow{p\to\infty}\frac{1}{12}+\frac{13-\sqrt{153}}{24} < \frac{1}{9.12} < \frac19.
\end{equation*}
Note again that the numerator $13p+8-\sqrt{153p^2+216p+80}$ increases in $p$.
\medskip

Let us summarize the argument above. Start with the most unmixed fair word $\sigma_f$ (which is \eqref{unmixed fair}). By replacing $AB$, $BC$, $CA$ by $BA$, $CB$, $AC$, i.e., (Step2) as much as possible, the probability $P(A>B) \,(=P(B>C)=P(C>A))$ increases most. Then the computation above tells us that 
$$
 P(A>B) < \frac{1}{2} + \frac{1}{9},
$$
which upholds Conjecture \ref{conjecture prob}.

\noindent
\begin{remark} \quad
\begin{itemize}
    \item Let us tell why we think the last constructed word in this argument provides the most probability on $P(A>B)$. First, observe that we have considered the only case when $n$ was even (when $n=6p$, $6p+2$ or $6p+4$). We, however, believe that this would be enough due to the following observation. A direct computation shows that there is only one possible probability $P(A>B)$ when $n=3$, or equivalently, any three balanced, non-transitive dice with 3 sides can be expressed by a balanced, non-transitive word which is similar to $\sigma_3=CBABACACB$. In this case $P_{\sigma_3}(A>B)=5/9$. Then add $CBA$ on the end of $\sigma_3$ and then exchange $CA$ (in $\sigma_3$) to $AC$. Then the constructed word $\sigma_4$ is $\sigma_4=CBABA{\color{red}AC}CB{\color{blue}CBA}$, which becomes balanced and non-transitive. For this, $P_{\sigma_4}(A>B)=9/16$, which is greater than $P_{\sigma_3}(A>B)=5/9$. A similar computation can be done for more general $n$. Therefore, with this trick (however we do not know if we can do this trick at all times), the probability of $P(A>B)$ would get greater for even $n$'s than odd ones.
    \item Second, proving that the above probability is the most probability is related to construction of all balanced non-transitive sets of dice and it also seems to be related to property of fair sets of dice, i.e., Conjecture \ref{fair dice} and we leave this for future work.
\end{itemize}
\end{remark}

\begin{remark}
In \cite{S18}, Schaefer further generalize the results in \cite{SS17} to sets of $n$-sided $m$ dice for any positive integer $m$. It will be very interesting to generalize our results to sets of $m$ dice for $m \geq 4$ and we also leave this for future work.
\end{remark}

\medskip



\subsection*{Acknowledgement}
The first-named author was supported by Basic Science Research Program through the National Research Foundation of Korea(NRF) funded by the Korea government  $($NRF-2019R1F1A1061300$)$. The second-named author was partially supported by Chonnam National University (Grant number: 2018-0978), and by the National Research Foundation of Korea (NRF) grant funded by the Korea government (MSIP) (No.2017R1C1B2010081).

\end{document}